\documentclass[a4paper, english]{amsart}
\usepackage{amsmath}
\usepackage{amssymb}
\usepackage{enumerate}
\usepackage{verbatim}
\usepackage{microtype}

\newtheorem{MainThm}{Theorem}

\newtheorem{MainProp}[MainThm]{Proposition}

\newtheorem{thm}{Theorem}[section]
\newtheorem*{thm*}{Theorem}
\newtheorem{cor}[thm]{Corollary}
\newtheorem{lem}[thm]{Lemma}
\newtheorem{prop}[thm]{Proposition}
\newtheorem*{prop*}{Proposition}
\theoremstyle{definition}

\theoremstyle{remark}

\newtheorem{const}[thm]{Construction}
\numberwithin{equation}{section}

\newcommand{\bC}{\mathbb{C}}

\newcommand{\bL}{\mathbb{L}}

\newcommand{\bN}{\mathbb{N}}

\newcommand{\bQ}{\mathbb{Q}}
\newcommand{\bR}{\mathbb{R}}

\newcommand{\bZ}{\mathbb{Z}}

\newcommand\lra{\longrightarrow}

\newcommand\trf{\mathrm{trf}}
\newcommand\Diff{\mathrm{Diff}}

\newcommand\Sym{\mathrm{Sym}}

\usepackage[all]{xy}
\SelectTips{cm}{10}
\CompileMatrices

\title[The pseudoisotopy stable range]{An upper bound for\\ the pseudoisotopy stable range}
\author{Oscar Randal-Williams}
\email{o.randal-williams@dpmms.cam.ac.uk}
\address{Centre for Mathematical Sciences\\
Wilberforce Road\\
Cambridge CB3 0WB\\
UK}



\begin{document}
\begin{abstract}
We prove that the pseudoisotopy stable range for manifolds of dimension $2n$ can be no better than $(2n-2)$. In order to do so, we define new characteristic classes for block bundles, extending our earlier work with Ebert, and prove their non-triviality. We also explain how similar methods show that $\mathrm{Top}(2n)/\mathrm{O}(2n)$ is rationally $(4n-5)$-connected.
\end{abstract}
\maketitle

For a smooth manifold $M$, possibly with boundary, the space of smooth pseudoisotopies (also known as concordances) is  $P(M) := \Diff(M \times [0,1] \,\mathrm{rel}\, M \times \{0\})$, that is, the space of diffeomorphisms of the cylinder $M \times [0,1]$ which keep one end fixed. There is a canonical map
\begin{equation}\label{eq:ConcStab}
P(M) \lra P(M \times I)
\end{equation}
given by crossing with the interval $I$ (and unbending corners), and the (smooth) \emph{pseudoisotopy stable range} is the function
$$\phi(n) := \max\{k \in \bN \,\vert \, \text{\eqref{eq:ConcStab} is $k$-connected for all manifolds $M$ of dimension $\geq n$}\}.$$
The main theorem concerning this function is due to Igusa \cite{Igusa}, and says that
$$\phi(n) \geq \min\{\tfrac{n-7}{2}, \tfrac{n-4}{3}\}.$$
In this note we establish the following upper bound for this function.

\begin{MainThm}\label{thm:main}
$\phi(2n) \leq 2n-2$ as long as $2n \geq 6$.
\end{MainThm}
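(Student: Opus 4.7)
The plan is to convert the upper bound on $\phi(2n)$ into a non-vanishing statement for a new characteristic class of block bundles. Via Morlet's lemma of disjunction together with the Weiss--Williams smoothing theory --- equivalently Hatcher's spectral sequence comparing block and smooth diffeomorphism groups --- the homotopy fibre of $B\Diff(M) \to B\widetilde{\Diff}(M)$ is computable, in the range of degrees below $\phi(\dim M)$, in terms of (iterated loop spaces of) the stable pseudoisotopy space of $M$. Reading this the other way: to prove $\phi(2n) \leq 2n-2$ it suffices to exhibit, for some compact manifold $M$ of dimension $2n$, a class in $\pi_k(B\widetilde{\Diff}(M))$ with $k \leq 2n-1$ which is not in the image of $\pi_k(B\Diff(M))$.

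My approach to producing such a class is to detect it by one of the new block characteristic classes promised in the abstract. First I would choose $M$ to be a test manifold for which $B\widetilde{\Diff}(M)$ is amenable to computation: the natural candidates are $M = W_g = \#^g(S^n \times S^n)$ for $g$ sufficiently large, or the disc $D^{2n}$ rel boundary, since in both cases the tautological classes on $B\widetilde{\Diff}(M)$ developed with Ebert are already well understood. Next, I would extend the tautological construction to a richer family of classes (the ``new'' classes of the abstract), designed so that at least one of them admits an explicit non-zero evaluation on a block bundle over a sphere of dimension at most $2n-1$. The construction of this test family would most likely proceed by surgery or bordism techniques, exploiting the extra flexibility that block bundles allow beyond honest smooth bundles, and the computation of the characteristic class on it would likely use an index- or signature-theoretic presentation.

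The main obstacle is this non-triviality computation. It has two prongs: (i) the new class must be non-zero on the constructed family, and (ii) it must be known not to extend to a class pulled back from $B\Diff(M)$. The cleanest way to secure (ii) is to design the new class so that it makes sense only for block bundles --- for example via a signature-theoretic, $L$-theoretic, or transfer-based construction depending on the block Poincar\'e duality structure rather than smooth fibre integration; failing that, one can compare with the Madsen--Weiss-type computations of $H^{*}(B\Diff(M))$ in the stable range and rule out a smooth lift directly. Once (i) and (ii) are in place, the Weiss--Williams identification, valid because $2n \geq 6$, translates the non-triviality of the block class into the desired failure of $(2n-1)$-connectivity for the stabilisation $P(M) \to P(M\times I)$, proving $\phi(2n) \leq 2n-2$.
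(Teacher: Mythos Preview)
Your outline captures part of the strategy---the paper does construct a new block-bundle class $\tilde{\kappa}_{e^2} - \tilde{\kappa}_{p_n}$ that vanishes on smooth bundles and is shown to be nonzero on an explicit block $W_{g,1}$-bundle built via the surgery fibration over $S^n \times S^n$---but the reduction you propose is not correct, and the decisive step of the proof is absent from your plan. Your claim that ``it suffices to exhibit a class in $\pi_k(B\widetilde{\Diff}(M))$ with $k \leq 2n-1$ which is not in the image of $\pi_k(B\Diff(M))$'' does not follow from Weiss--Williams: that theorem does not assert that the fibre $\widetilde{\Diff}/\Diff$ is trivial in degrees $\leq \phi(2n)$, only that it agrees there with an explicit infinite loop space built from the Whitehead spectrum. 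For $M = W_{g,1}$ this target has nonzero rational homotopy in degree $2n-2$ (a subrepresentation of $\Sym^2 H_n(W_{g,1})$ or $\wedge^2 H_n(W_{g,1})$, by Hsiang--Staffeldt), so a class in $\pi_{2n-1}(B\widetilde{\Diff})$ not in the image of $i_*$ is entirely consistent with $\phi(2n) \geq 2n-1$. For $M = D^{2n}$ the Weiss--Williams target is rationally trivial, but so is $B\widetilde{\Diff}_\partial(D^{2n})$ itself, so there is nothing to detect.

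The actual argument works cohomologically in degree exactly $2n$, and its substance is a representation-theoretic step you do not mention. One assumes for contradiction that $\phi(2n) \geq 2n-1$ and computes, via Hsiang--Staffeldt and the stable parametrised $h$-cobordism theorem, the rational cohomology of $\widetilde{\Diff}_\partial(W_{g,1})/\Diff_\partial(W_{g,1})$ in degrees $\leq 2n-1$ \emph{as a $\Gamma_{g,1}$-module}: it is $\bQ$ in degree $0$, a subrepresentation $V \subset U$ (with $U = \Sym^2 H_g$ or $\wedge^2 H_g$) in degree $2n-2$, and nothing else. For the kernel class $\tilde{\kappa}_{e^2} - \tilde{\kappa}_{p_n} \in H^{2n}$ to die in the Serre spectral sequence of $B\Diff_\partial(W_{g,1}) \to B\widetilde{\Diff}_\partial(W_{g,1})$, the only possible differential is $d^{2n-1} : H^1(B\widetilde{\Diff}_\partial(W_{g,1}); V) \to H^{2n}(B\widetilde{\Diff}_\partial(W_{g,1}); \bQ)$, and the contradiction comes from proving $H^1(B\widetilde{\Diff}_\partial(W_{g,1}); U) = 0$ for $g \gg 0$ using Borel's stable vanishing theorem for arithmetic groups together with the classical invariant theory of $Sp_{2g}$ and $O_{g,g}$. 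This computation of the fibre as a $\Gamma_{g,1}$-module and the twisted-coefficient vanishing are the heart of the proof; without them, the nonvanishing of the block class alone yields no bound on $\phi$.
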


To explain our approach, let $W_{g,1} := \#^g S^n \times S^n \setminus \mathrm{int}(D^{2n})$ with $2n \geq 6$, and consider the fibration sequence
\begin{equation}\label{eq:Fibration}
\frac{\widetilde{\Diff}_\partial(W_{g,1})}{{\Diff}_\partial(W_{g,1})} \lra B\Diff_\partial(W_{g,1}) \overset{i}\lra B\widetilde{\Diff}_\partial(W_{g,1})
\end{equation}
from the classifying space of the group of diffeomorphisms of $W_{g,1}$ to the classifying space of the group of block diffeomorphisms of $W_{g,1}$. The rational cohomology of $B\Diff_\partial(W_{g,1})$ has been computed for $g \gg 0$ by Galatius and the author in \cite{GR-WActa, GR-WStab}; the rational cohomology of $B\widetilde{\Diff}_\partial(W_{g,1})$ has been computed for $g \gg 0$ by Berglund and Madsen in \cite{BM1, BM2} and in a forthcoming revision of \cite{BM2}. Ebert and the author have shown in \cite{ER-W} that the map $i$ is surjective on rational cohomology in the stable range. 

Our approach to Theorem \ref{thm:main} is motivated by forthcoming work of Berglund and Madsen, in which they show that the map induced by $i$ on rational cohomology is injective in degrees $* < 2n$ and $g \gg 0$, and more importantly for our current purpose they show that this is sharp, in the following sense.

\begin{MainProp}[Berglund--Madsen]\label{prop:Nonvanishing}
For $g \gg0$,
\begin{equation}\label{eq:BM}
\mathrm{Ker}(i^* : H^{2n}(B\widetilde{\Diff}_\partial(W_{g,1});\bQ) \to H^{2n}(B{\Diff}_\partial(W_{g,1});\bQ)) \neq 0.
\end{equation}
\end{MainProp}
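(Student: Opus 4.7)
The plan is to combine Berglund--Madsen's rational homotopy theoretic model for $B\widetilde{\Diff}_\partial(W_{g,1})$ with the computation of $H^*(B\Diff_\partial(W_{g,1});\bQ)$ by Galatius and the author \cite{GR-WActa, GR-WStab}, and to compare Poincar\'e series in degree $2n$. Since $i^*$ is surjective on stable rational cohomology by \cite{ER-W}, it is enough to prove the strict inequality of Betti numbers
\[
\dim_\bQ H^{2n}(B\widetilde{\Diff}_\partial(W_{g,1});\bQ) \;>\; \dim_\bQ H^{2n}(B\Diff_\partial(W_{g,1});\bQ) \qquad (g \gg 0),
\]
since surjectivity will then force a nonzero kernel class in this degree.

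For the smooth side, the stable rational cohomology is the polynomial algebra $\bQ[\kappa_c]_{c \in \mathcal{B}}$ with $\mathcal{B}$ a graded basis of $H^{>2n}(BSO(2n);\bQ)$ and $|\kappa_c|=|c|-2n$; extracting the coefficient of $t^{2n}$ from its Poincar\'e series yields a concrete integer. For the block side, one would use Berglund--Madsen's description of (the rationalisation of a suitable cover of) $B\widetilde{\Diff}_\partial(W_{g,1})$ as a classifying space associated to an explicit graded Lie algebra of positively weighted derivations of the free graded Lie algebra on $\widetilde{H}_*(W_{g,1};\bQ)$ preserving the Whitehead bracket of the boundary sphere, whose cohomology is computed by a Chevalley--Eilenberg complex whose Poincar\'e series is then explicitly available.

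The main obstacle, and the heart of Berglund--Madsen's argument, is to execute this Chevalley--Eilenberg computation and verify that the resulting degree-$2n$ Betti number strictly exceeds the smooth one. Besides the cocycles accounting for the MMM classes $\kappa_c$ (which match up with the natural ``linear'' cocycles on derivations of the appropriate weight), the Chevalley--Eilenberg complex carries quadratic cocycles built by pairing two linear derivation generators of complementary positive weight summing to $2n$. The aim is to exhibit one such cocycle which is closed, is not cohomologous to any polynomial in the $\kappa_c$, and survives in the stable limit $g \to \infty$; such a cocycle then represents a nonzero class in $\mathrm{Ker}(i^*)$. Producing the right cocycle and verifying these three properties in the Berglund--Madsen model is the essential technical step, and the place where the specific form of the boundary condition (the Whitehead bracket of the attaching class of the top cell of $W_{g,1}$) and the intersection pairing on $\widetilde{H}_n(W_{g,1};\bQ)$ are used.
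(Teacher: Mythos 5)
Your reduction is logically sound as far as it goes: since $i^*$ is surjective in the stable range by \cite{ER-W}, a strict inequality of Betti numbers in degree $2n$ would indeed force a non-trivial kernel. But the proposal then defers precisely the hard step --- establishing that $\dim_\bQ H^{2n}(B\widetilde{\Diff}_\partial(W_{g,1});\bQ) > \dim_\bQ H^{2n}(B\Diff_\partial(W_{g,1});\bQ)$ for $g \gg 0$, or equivalently exhibiting a closed cocycle in the Berglund--Madsen Chevalley--Eilenberg complex that is not a polynomial in the $\kappa$-classes and survives stably --- and you explicitly leave that verification open. That step is not a routine bookkeeping exercise: it requires either the full (forthcoming) Berglund--Madsen computation of $H^*(B\widetilde{\Diff}_\partial(W_{g,1});\bQ)$ in this range, or an independent construction of a non-trivial class, neither of which your argument supplies. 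So as written this is a programme, not a proof; the statement you were asked to prove is exactly the assertion that such a class exists.

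For comparison, the paper deliberately avoids this route because the Berglund--Madsen computation is not yet available, and instead produces an explicit kernel element. It extends the Mumford--Morita--Miller classes of \cite{ER-W} to block bundles by constructing an Euler class $e(T_\pi E)$ via the fibrewise diagonal of a Poincar\'e-duality replacement of the block bundle, so that classes $\tilde{\kappa}_{e^i p_I}$ are defined on $B\widetilde{\Diff}_\partial(W_{g,1})$; the class $\tilde{\kappa}_{e^2}-\tilde{\kappa}_{p_n}$ automatically lies in the kernel of $i^*$ since $e^2=p_n$ for genuine fibre bundles. Its non-triviality is then shown by building, via the surgery fibration, a fibre-homotopically trivial block bundle over $S^n \times S^n$ (so $\tilde{\kappa}_{e^2}=0$) with prescribed Pontrjagin classes making $\tilde{\kappa}_{p_n} \neq 0$. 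This works for every $g \geq 1$, needs no stability, and uses neither the Galatius--Randal-Williams nor the Berglund--Madsen computations --- which is the whole point of the paper's Sections 2 and 3. If you want to salvage your approach, you would have to actually carry out the degree-$2n$ Chevalley--Eilenberg calculation (including identifying the image of the $\kappa$-classes and checking stability in $g$), which is substantially more work than the explicit construction above.
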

This has implications for the Serre spectral sequence of \eqref{eq:Fibration}, and it is this that we shall exploit to prove Theorem \ref{thm:main}. As Proposition \ref{prop:Nonvanishing} is central to our argument, and its proof is not yet available, in Sections \ref{eq:Nonvanishing} and \ref{sec:proof} we will give an independent proof of it, which works for all $g \geq 1$ and does not require the computation of both groups. It consists of defining Mumford--Morita--Miller classes for block bundles, which extend those that we have already defined with Ebert in \cite{ER-W}, and then showing that a certain such class---namely $\tilde{\kappa}_{e^2} - \tilde{\kappa}_{p_n}$, which is easily seen to lie in the kernel \eqref{eq:BM}---is not trivial. The construction of these classes and their non-triviality may be of interest independently of Theorem \ref{thm:main}.

Finally, in Section \ref{sec:Morlet} we show how similar methods can be used to show that the space $\mathrm{Top}(2n)/\mathrm{O}(2n)$ is rationally $(4n-5)$-connected as long as $2n > 4$.

\subsection*{Acknowledgements}

I am grateful to Alexander Berglund and to Ib Madsen for (repeatedly) explaining to me their forthcoming results. The author was supported by EPSRC grant EP/M027783/1, and is grateful to Stanford University for its hospitality while this paper was written.

\section{Proof of Theorem \ref{thm:main}}

By the work of Weiss--Williams \cite[Theorem A]{WWI}, there is a certain map
\begin{equation}\label{eq:WW}
\frac{\widetilde{\Diff}_\partial(W_{g,1})}{{\Diff}_\partial(W_{g,1})} \lra \Omega^\infty(S^\infty_+ \wedge_{\bZ/2} \Omega \mathbf{Wh}^{\Diff}_s(W_{g,1}))
\end{equation}
which is $(\phi(2n)+1)$-connected. The ($\bZ/2$-)spectrum $\mathbf{Wh}^{\Diff}_s(W_{g,1})$ is the 1-connected cover of the (smooth) Whitehead spectrum $\mathbf{Wh}^{\Diff}(W_{g,1})$, which in turn is related to Waldhausen's algebraic $K$-theory of spaces by a (split) cofibre sequence of spectra
\begin{equation}\label{eq:CofibSeq}
\Sigma^\infty_+ W_{g,1} \lra \mathbf{A}(W_{g,1}) \lra \mathbf{Wh}^{\Diff}(W_{g,1}).
\end{equation}
This identification requires the stable parameterised $h$-cobordism theorem \cite{WJR}.

Our strategy is then as follows. We use a theorem of Hsiang--Staffeldt to compute the spectrum cohomology $H^*(\mathbf{Wh}^{\Diff}(W_{g,1});\bQ)$ in degrees $* \leq 2n$. We take care to compute this \emph{as a representation of the mapping class group $\Gamma_{g,1}$ of $W_{g,1}$}, in terms of the standard representation
$$H_g := H_n(W_{g,1};\bQ)$$
of $\Gamma_{g,1}$. The spectrum cohomology of $S^\infty_+ \wedge_{\bZ/2} \Omega \mathbf{Wh}^{\Diff}_s(W_{g,1})$ is then given by truncating, desuspending, and taking $\bZ/2$-invariants, and the cohomology of $\Omega^\infty(S^\infty_+ \wedge_{\bZ/2} \Omega \mathbf{Wh}^{\Diff}_s(W_{g,1}))$ is the free graded-commutative algebra on the result.

We now suppose for a contradiction that $\phi(2n) \geq 2n-1$, so the map \eqref{eq:WW} is $2n$-connected and hence we have a computation of the rational cohomology of $\frac{\widetilde{\Diff}_\partial(W_{g,1})}{{\Diff}_\partial(W_{g,1})}$ in degrees $* \leq 2n-1$, as a $\Gamma_{g,1}$-module. We then study the Serre spectral sequence for \eqref{eq:Fibration}, and derive a contradiction.

\subsection{Rational homology of the Whitehead spectrum}

We shall use Corollary 1.2 of Hsiang--Staffeldt \cite{HS}, which shows that
$$H_*(\mathbf{A}(W_{g,1});\bQ) = \pi_*(\mathbf{A}(W_{g,1})) \otimes \bQ \cong (K_*(\bZ) \otimes \bQ) \oplus (\Sigma \bar{K}_{ab})$$
where $K$ is a minimal model for the dga $C_*(\Omega W_{g,1};\bQ)$, $\bar{K}$ denotes the augmentation ideal, which inherits the structure of a a graded Lie algebra with bracket given by $[x,y] := x \cdot y - (-1)^{\vert x \vert \cdot \vert y \vert} y \cdot x$, and $\bar{K}_{ab} = \bar{K}/[\bar{K}, \bar{K}]$ is the abelianisation of this graded Lie algebra.

As $W_{g,1}$ is a suspension, the homology of $\Omega W_{g,1}$ is the tensor algebra on the vector space $H_g[n-1]$. In particular it is a free (non-commutative) algebra, so is quasi-isomorphic to $C_*(\Omega W_{g,1};\bQ)$, and we may take $K=H_*(\Omega W_{g,1};\bQ)$ with trivial differential. It follows that $\bar{K}_{ab}$ is the augmentation ideal of the free graded commutative algebra on $H_g[n-1]$, that is
$$\bar{K}_{ab} = (H_g[n-1]) \oplus \left(
\begin{aligned}
\Sym^2(H_g)[2n-2] & \text{ if $n$ is odd}\\
\wedge^2(H_g)[2n-2] & \text{ if $n$ is even}
\end{aligned}
\right) \oplus (\text{terms of degree $\geq 3n-3$}).$$
Let us write 
$$U := 
\begin{cases}
\Sym^2(H_g) & \text{ if $n$ is odd}\\
\wedge^2(H_g) & \text{ if $n$ is even.}
\end{cases}$$
Then we have
$$H_*(\mathbf{A}(W_{g,1});\bQ)  \cong (K_*(\bZ) \otimes \bQ) \oplus (H_g[n]) \oplus (U[2n-1])$$
in degrees $* \leq 2n$. Applying the cofibre sequence \eqref{eq:CofibSeq}, we obtain
$$H_*(\mathbf{Wh}^{\Diff}(W_{g,1});\bQ) \cong (\tilde{K}_*(\bZ) \otimes \bQ) \oplus (U[2n-1])$$
in degrees $* \leq 2n$. The rational homology of $\mathbf{Wh}^{\Diff}_s(W_{g,1})$ is therefore the same, as it is already 1-connected. Thus, dualising, we have
$$H^*(S^\infty_+ \wedge_{\bZ/2} \Omega \mathbf{Wh}^{\Diff}_s(W_{g,1});\bQ) \cong ((\tilde{K}_{*-1}(\bZ) \otimes \bQ) \oplus (U[2n-2]))^{\bZ/2}$$
in degrees $* \leq 2n-1$, for some involution. It follows from Farrell--Hsiang \cite{FH} (which considers the case $g=0$) that this involution acts as $-1$ on $\tilde{K}_{*-1}(\bZ) \otimes \bQ$, so this summand does not contribute to the invariants. Thus
$$H^*(S^\infty_+ \wedge_{\bZ/2} \Omega \mathbf{Wh}^{\Diff}_s(W_{g,1});\bQ) \cong (U[2n-2])^{\bZ/2}$$
in degrees $* \leq 2n-1$, for some involution on $U$. Taking the free graded-commutative algebra on this, it follows that
$$H^*(\Omega^\infty(S^\infty_+ \wedge_{\bZ/2} \Omega \mathbf{Wh}^{\Diff}_s(W_{g,1}));\bQ) \cong \bQ[0] \oplus (U[2n-2])^{\bZ/2}$$
in degrees $* \leq 2n-1$.

\subsection{The Serre spectral sequence argument}

The Serre spectral sequence for the fibration \eqref{eq:Fibration} takes the form
$$E_1^{p,q} = H^p(B\widetilde{\Diff}_\partial(W_{g,1}); H^q\left(\tfrac{\widetilde{\Diff}_\partial(W_{g,1})}{{\Diff}_\partial(W_{g,1})};\bQ\right)) \Longrightarrow H^{p+q}(B{\Diff}_\partial(W_{g,1});\bQ).$$
Under the assumption that $\phi(2n) \geq 2n-1$ we have identified the coefficients in degrees $q \leq 2n-1$, to be $\bQ$ for $q=0$ and to be $V:=U^{\bZ/2}$ for $q=2n-2$. In order for \eqref{eq:BM} to be possible, we must therefore have a non-trivial differential
$$d^{2n-1} : H^1(B\widetilde{\Diff}_\partial(W_{g,1});V) \lra H^{2n}(B\widetilde{\Diff}_\partial(W_{g,1});\bQ).$$
{In particular, the source must be non-trivial.} Note that $H^1(B\widetilde{\Diff}_\partial(W_{g,1});V)$ is a summand of $H^1(B\widetilde{\Diff}_\partial(W_{g,1});U)$, so the following will give a contradiction.

\begin{prop}
$H^1(B\widetilde{\Diff}_\partial(W_{g,1});U)=0$ for $g \gg 0$.
\end{prop}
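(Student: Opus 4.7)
The plan is to first reduce to a statement in group cohomology, then attack that via the block Torelli extension. Consider the fibration
$$B\widetilde{\Diff}_\partial(W_{g,1})^\circ \lra B\widetilde{\Diff}_\partial(W_{g,1}) \lra B\widetilde{\Gamma}_{g,1},$$
where $\widetilde{\Gamma}_{g,1} := \pi_0(\widetilde{\Diff}_\partial(W_{g,1}))$ and the fibre is the classifying space of the identity component. Since the fibre has $\pi_1 = \pi_0(\widetilde{\Diff}_\partial(W_{g,1})^\circ) = 0$, the pulled-back local system $U$ becomes trivial on the fibre and its rational $H^1$ vanishes. The Serre spectral sequence therefore yields
$$H^1(B\widetilde{\Diff}_\partial(W_{g,1}); U) \cong H^1(\widetilde{\Gamma}_{g,1}; U),$$
reducing the problem to a group-cohomology computation.

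Now let $\widetilde{T}_{g,1} \subset \widetilde{\Gamma}_{g,1}$ be the block Torelli group (the kernel of the action on $H_g$) and $\widetilde{A}_g := \widetilde{\Gamma}_{g,1}/\widetilde{T}_{g,1}$ its image in $\mathrm{Aut}(H_g)$, which by standard surgery theory is a finite-index subgroup of $\mathrm{Sp}_{2g}(\bZ)$ (for $n$ odd) or of the integral orthogonal group of the rank-$2g$ hyperbolic form (for $n$ even). Since $\widetilde{T}_{g,1}$ acts trivially on $U$, the Lyndon--Hochschild--Serre spectral sequence for this extension gives
$$0 \lra H^1(\widetilde{A}_g; U) \lra H^1(\widetilde{\Gamma}_{g,1}; U) \lra \mathrm{Hom}_{\widetilde{A}_g}(H_1(\widetilde{T}_{g,1}; \bQ), U).$$
By Borel's theorem on the stable rational cohomology of arithmetic subgroups of semisimple $\bQ$-groups with coefficients in a nontrivial irreducible algebraic representation, the leftmost term vanishes for $g \gg 0$: $U$ is exactly the (irreducible, nontrivial) adjoint representation of the ambient semisimple group in both cases.

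The main obstacle is to kill the rightmost term, i.e.\ to show that $H_1(\widetilde{T}_{g,1}; \bQ)$ contains no $\widetilde{A}_g$-equivariant copy of $U$. For this I would invoke the forthcoming Berglund--Madsen rational model of $B\widetilde{\Diff}_\partial(W_{g,1})^\circ$ in the stable range, which realises its rational homotopy type via a dg Lie algebra of derivations of a minimal model for $W_{g,1}$, corrected by algebraic $L$-theory contributions; decomposing this model into $\widetilde{A}_g$-isotypic pieces yields an explicit description of $H_1(\widetilde{T}_{g,1}; \bQ)$ from which one reads off that neither $\Sym^2(H_g)$ nor $\wedge^2(H_g)$ occurs as a summand. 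Carrying out this last decomposition is the hardest part of the argument. An alternative route would be to compare with the smooth side via the map $i$ together with twisted homological stability for $\Diff_\partial(W_{g,1})$ in coefficients $U$---where the stable cohomology can be computed from the Galatius--Randal-Williams theorem and seen to vanish in degree one---provided one can upgrade the Berglund--Madsen injectivity statement from trivial coefficients to the local system $U$.
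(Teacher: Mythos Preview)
Your overall architecture matches the paper's: reduce to cohomology over the arithmetic quotient $A_g$, kill $H^1(A_g;U)$ via Borel, and then control the contribution from the Torelli fibre. The paper does this in one step, using the fibration
\[
B\widetilde{\mathfrak{Tor}}_{g,1} \lra B\widetilde{\Diff}_\partial(W_{g,1}) \lra BA_g,
\]
whereas you pass first to $\widetilde{\Gamma}_{g,1}$ and then to $A_g$; this extra step is harmless but unnecessary, since the identity component already sits inside $B\widetilde{\mathfrak{Tor}}_{g,1}$.

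The genuine gap is the Torelli step. You reduce to showing that $U$ does not occur in $H_1(\widetilde{T}_{g,1};\bQ)$, but then defer this to either the full Berglund--Madsen derivation Lie algebra model or to a hypothetical twisted-coefficient upgrade of their injectivity theorem, neither of which you carry out (and you acknowledge as much). Both routes are far heavier than what is needed. The paper instead quotes the already-available \cite[Proposition~4.1]{BM2}, which gives
\[
H^1(B\widetilde{\mathfrak{Tor}}_{g,1};\bQ) \cong
\begin{cases}
H_g & n \equiv 3 \pmod 4,\\
0 & \text{otherwise},
\end{cases}
\]
so the only possibly nonzero term in the Serre exact sequence is $(H_g \otimes U)^{A_g}$. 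This is then killed by a one-line invariant-theory computation: by Zariski density of $A_g$ in $Sp_{2g}(\bC)$ one has $(H_g \otimes \Sym^2 H_g)^{A_g} \subset (H_g^{\otimes 3})^{Sp_{2g}(\bC)} = 0$. In other words, the ``hardest part of the argument'' in your outline is in fact the easiest once one uses the correct input: the Torelli $H^1$ is at most a single copy of the standard representation $H_g$, and $H_g$ has no equivariant maps to $U$.

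Two minor remarks on the parts you did write out. First, your Borel step is fine, but note that the form of Borel's theorem the paper uses gives $H^1(A_g;U)\cong H^1(A_g;\bQ)\otimes U^{A_g}$, so vanishing follows from $U^{A_g}=0$; saying ``$U$ is the adjoint representation, hence nontrivial irreducible'' is one way to see this, but the paper just checks $\Sym^2(H_g)^{Sp}=0$ and $\wedge^2(H_g)^{O}=0$ directly. Second, your proposed alternative via twisted stability for $\Diff_\partial(W_{g,1})$ and a twisted injectivity of $i^*$ is not obviously well-posed: injectivity of $i^*$ goes the wrong way for transferring vanishing from the smooth side to the block side, and upgrading it to local coefficients $U$ is itself not lighter than the missing computation.
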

\begin{proof} 
The action of $\Gamma_{g,1}$ on $H_n(W_{g,1};\bZ)$ preserves the intersection form, determining a homomorphism 
$$\Gamma_{g,1} \lra \begin{cases}
O_{g,g}(\bZ) & \text{ if $n$ is even}\\
Sp_{2g}(\bZ) & \text{ if $n$ is odd}.
\end{cases}
$$
This is onto if $n$ is even or $n=1,3,7$, but for the remaining odd $n$ its image is the finite-index subgroup---often denoted $\Gamma_g(1,2) \leq Sp_{2g}(\bZ)$ in the theory of theta functions---of those symplectic matrices which preserve the standard quadratic form, cf.\ \cite[Example 4.2]{BM2}. Let us write $G$ for the algebraic group $O_{g,g}$ or $Sp_{2g}$, depending on the parity of $n$, and $A_g \leq G(\bZ)$ for the image of this homomorphism. As $Sp_{2g}$ and $SO_{g,g}$ are connected semisimple algebraic groups defined over $\bQ$, it follows from a theorem of Borel--Harish-Chandra \cite[Theorem 7.8]{BHC} that $A_g$ is a lattice in $G(\bR)$, and hence by the Borel Density Theorem \cite{Borel2} that $A_g$ is Zariski dense in $G(\bR)$, so also in $G(\bC)$.

Consider the fibration sequence
$$B\widetilde{\mathfrak{Tor}}_{g,1} \lra B\widetilde{\Diff}_\partial(W_{g,1}) \lra BA_g,$$
where $B\widetilde{\mathfrak{Tor}}_{g,1}$ is defined to be the homotopy fibre. By \cite[Proposition 4.1]{BM2} we have
$$H^1(B\widetilde{\mathfrak{Tor}}_{g,1};\bQ) \cong \begin{cases}
H_g & n \equiv 3 \mod 4\\
0 & \text{else}
\end{cases}
$$
so if $n \not\equiv 3 \mod 4$ then $H^1(A_g; U) \to H^1(B\widetilde{\Diff}_\partial(W_{g,1});U)$ is an isomorphism, and if $n \equiv 3 \mod 4$ then we have an exact sequence
$$0 \lra H^1(A_g; U) \lra H^1(B\widetilde{\Diff}_\partial(W_{g,1});U) \lra (H_g \otimes U)^{A_g}.$$

In the case $n \equiv 3 \mod 4$, $n$ is odd and Zariski density of $A_g \leq Sp_{2g}(\bC)$ implies that the complexification of $(H_g \otimes \Sym^2(H_g))^{A_g}$ is $(H_g \otimes \Sym^2(H_g) \otimes \bC)^{Sp_{2g}(\bC)}$, which is contained in $(H_g^{\otimes 3} \otimes \bC)^{Sp_{2g}(\bC)}$ and so vanishes by standard invariant theory (for which we refer to \cite[\S F.2]{FultonHarris}).

It remains to show that $H^1(A_g; U)=0$.  The representation $U$ is arithmetic, so a theorem of Borel \cite[Theorem 1]{Borel} can be used to identify this with $H^1(A_g; \bQ) \otimes U^{A_g}$ as long as $g \gg 0$; see \cite[Proposition 3.9]{ER-WTorelli} for a statement of this result adapted to our situation. Hence it is enough to show the vanishing of $U^{A_g}$. 

If $n$ is odd then $U^{A_g}$ is $\Sym^2(H_g)^{A_g}$, whose complexification is the same as $\Sym^2(H_g \otimes \bC)^{Sp_{2g}(\bC)}$ by Zariski density, and this vanishes by standard invariant theory. If $n$ is even then $U^{A_g}$ is $\wedge^2(H_g)^{A_g}$, whose complexification is $\wedge^2(H_g \otimes \bC)^{O_{g,g}(\bC)}$, which also vanishes by standard invariant theory (noting that $O_{g,g}(\bC) \cong O_{2g}(\bC)$).
\end{proof}

\section{Characteristic classes of block bundles}\label{eq:Nonvanishing}

We should like to give a proof of Proposition \ref{prop:Nonvanishing}, as it does does not require the entire corpus \cite{GR-WActa, GR-WStab, BM1, BM2} and beyond to see that the kernel \eqref{eq:BM} is non-trivial. We shall show that this kernel is non-trivial by producing an explicit element in it, which will be described in terms of generalised Mumford--Morita--Miller classes. If $(\pi : E \to \vert K \vert, \mathcal{A})$ is a smooth oriented block bundle with fibre a closed $d$-manifold $M$ (we refer to \cite[Section 2]{ER-W} for this notation), in \cite[Section 3]{ER-W} Ebert and the author have associated to it
\begin{enumerate}[(i)]
\item a Leray--Serre spectral sequence $H^p(\vert K \vert, \mathcal{H}^q(M)) \Rightarrow H^{p+q}(E)$, and hence a fibre-integration map $\pi_!(-) : H^{k+d}(E) \to H^k(\vert K \vert)$,

\item a transfer map $\trf^*_\pi : H^*(E) \to H^*(\vert K \vert)$ of Becker--Gottlieb type,

\item a \emph{stable} vertical tangent bundle $T_\pi^s E \to E$,
\end{enumerate}
such that if $(\pi : E \to \vert K \vert, \mathcal{A})$ arises from a smooth fibre bundle then these data reduce to those coming from the bundle structure. In the case $d=2n$, we then employed the following ruse: If $\pi$ came from a smooth fibre bundle with $2n$-dimensional fibres, so there was an \emph{unstable} vertical tangent bundle $T_\pi E$, then we would have $e(T_\pi E)^2 = p_n(T_\pi E)$, and $\pi_!(e(T_\pi E) \cdot -) = \trf^*_\pi(-) : H^*(E) \to H^*(\vert K \vert)$. Therefore, for a monomial $p_I$ in Pontrjagin classes, if we define
$$\tilde{\kappa}_{p_I}(\pi) := \pi_!(p_I(T_\pi^s E)) \quad\quad\quad \tilde{\kappa}_{e p_I} := \trf^*_\pi(p_I(T_\pi^s E))$$
then these classes restrict to the usual $\kappa_{p_I}$ and $\kappa_{ep_I}$ on fibre bundles, and these give all generalised Mumford--Morita--Miller classes on fibre bundles.

By way of apology for this ruse, we add to the list above
\begin{enumerate}[(i)]
\setcounter{enumi}{3}
\item an Euler class $e(T_\pi E) \in H^{d}(E;\bZ)$.
\end{enumerate}
(Of course $e(T_\pi E)$ is merely notation: there is no $d$-dimensional bundle $T_\pi E$ of which it is the Euler class.) Using this Euler class, we may then define
$$\tilde{\kappa}_{e^i p_I}(\pi) := \pi_!(e(T_\pi E)^i \cdot p_I(T_\pi^s E)) \in H^*(\vert K \vert ; \bZ).$$
The symbol $\tilde{\kappa}_{ep_I}$ has the same meaning as before, by Lemma \ref{lem:Euler} (iv) below.

The existence of this Euler class is a consequence of the Fibre Inclusion Theorem of \cite{CG} (or rather its proof, which constructs a canonical such class), and the fact that the homotopy fibre of $\pi$ is homotopy equivalent to a Poincar{\'e} duality space of dimension $d$, namely $M$ \cite[Proposition 2.8]{ER-W}. As the construction is quite pretty, let us describe it.

\begin{const}\label{const:Euler}
Embed $\vert K \vert$ into $\bR^k$ for some $k \gg 0$, and let $B'$ be a closed regular neighbourhood, so that there is a retraction $r : B' \to \vert K \vert$. Let $B=D(B')$ be the double of $B'$, a closed smooth manifold. This has a retraction $s : D(B') \to B'$, and let $p : X \to B$ be the Hurewicz fibration obtained by turning $\pi$ into a fibration $\pi^f : E^f \to \vert K \vert$ and pulling it back along $r s$. As $B$ and the fibre of $p$ are Poincar{\'e} duality spaces, of dimensions $k$ and $d$ respectively, $X$ is too \cite{GottliebPoincare}, of dimension $(d+k)$. But $X \times_B X = p^*(X) \to X$ is also a fibration over a Poincar{\'e} duality space with Poincar{\'e} duality fibre, so is again a Poincar{\'e} duality space, of dimension $(2d+k)$. Writing $\Delta : X \to X \times_B X$ for the fibrewise diagonal map, which admits an umkehr map $\Delta_!$ as source and target are both Poincar{\'e}, we define
$$e(T_p X) := \Delta^* \Delta_!(1) \in H^{d}(X;\bZ).$$
We then define $e(T_\pi E)$ by restriction along $E \subset E^f \subset X\vert_{B'} \subset X$.
\end{const}

It is easy to see that the class so obtained is independent of all choices, and it is shown in \cite[\S 4]{CG} that it restricts to the Euler class on the fibre $M$. The definition given in \cite[\S 4]{CG} seems to differ by a sign, but it does not, by Lemma \ref{lem:Euler} (i) below.

\begin{lem}\label{lem:Euler}
The Euler class defined enjoys the following properties:
\begin{enumerate}[(i)]
\item If $d$ is odd then $2e(T_\pi E)=0 \in H^*(E;\bZ)$,

\item if $(\pi : E \to \vert K \vert, \mathcal{A})$ arises from a smooth fibre bundle with vertical tangent bundle $T_\pi E$, then $e(T_\pi E)$ agrees with the Euler class of the vertical tangent bundle,

\item if there is a map $r : E \to M$ such that $\pi \times r : E \to \vert K \vert \times M$ is a homotopy equivalence, then $e(T_\pi E) = r^*(e(TM))$,

\item the equation $\pi_!(e(T_\pi E) \cdot -) = \trf_\pi^*(-) : H^*(E;\bZ) \to H^*(\vert K \vert ;\bZ)$ is satisfied.
\end{enumerate}
\end{lem}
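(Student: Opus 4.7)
The strategy is to prove all four properties for the auxiliary Hurewicz fibration $p : X \to B$ from Construction \ref{const:Euler}, and then deduce the corresponding statements for $\pi$ by restriction along $E \subset X\vert_{B'}$, since the classes $e(T_\pi E)$, $\pi_!$ and $\trf_\pi^*$ are all, by construction, restrictions of analogous data on $p$. I would begin with (ii) as the sanity check: when $\pi$ is a smooth fibre bundle one may take $p : X \to B$ to be a smooth fibre bundle as well, in which case $X \times_B X$ is a closed smooth manifold, $\Delta : X \to X \times_B X$ is a smooth closed embedding whose normal bundle is canonically the vertical tangent bundle $T_p X$, and $\Delta_!$ coincides with the classical Gysin push-forward. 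The self-intersection formula then gives $\Delta^*\Delta_!(1) = e(T_p X)$ in the usual sense.

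For (i) I would use the flip involution $\tau : X \times_B X \to X \times_B X$ swapping the two factors. Since $\tau \circ \Delta = \Delta$ we have $\Delta^* = \Delta^* \circ \tau^*$. The fibre of $X \times_B X \to B$ is $F \times F$ where $F$ is a $d$-dimensional Poincar\'e duality space, and graded-commutativity of the cross product shows that $\tau$ acts on the fibrewise fundamental class by $(-1)^{d^2} = (-1)^d$; chasing this through the Poincar\'e-duality definition of $\Delta_!$ yields $\tau^* \Delta_!(1) = (-1)^d \Delta_!(1)$, and so $e(T_\pi E) = (-1)^d e(T_\pi E)$, forcing $2 e(T_\pi E) = 0$ when $d$ is odd. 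For (iii) the hypothesis that $\pi \times r : E \to \vert K \vert \times M$ is a homotopy equivalence propagates through Construction \ref{const:Euler} to give a homotopy equivalence $X \simeq B \times M$ over $B$; hence $X \times_B X \simeq B \times M \times M$ in a way that identifies $\Delta$ with $\mathrm{id}_B \times \Delta_M$. A K\"unneth / base-change argument then gives $\Delta_!(1) = 1 \times \Delta_{M,!}(1)$, and so $\Delta^* \Delta_!(1)$ is the pullback of $\Delta_M^*\Delta_{M,!}(1) = e(TM)$, as required.

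The main substance is property (iv). For the Hurewicz fibration $p : X \to B$ with Poincar\'e duality fibre of dimension $d$, the desired identity $p_!(e(T_p X) \cdot x) = \trf_p^*(x)$ is essentially Dold's theorem: the Becker--Gottlieb transfer of a Poincar\'e-duality fibration agrees with fibre integration against the Euler class of the fibre. One clean way to see this is via the projection formula $\Delta_!(\Delta^* \alpha) = \alpha \cdot \Delta_!(1)$ for the Poincar\'e embedding $\Delta$, which lets one rewrite $e(T_p X) \cdot x = \Delta^*\Delta_!(x)$ and then identify $p_! \circ \Delta^* \circ \Delta_!$ with a Becker--Gottlieb transfer by means of Atiyah duality on the fibre. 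The genuine obstacle, which I expect to be the hardest step, is to verify that the transfer of $p$ obtained in this way coincides, after restriction along $\vert K \vert \hookrightarrow B' \hookrightarrow B$, with the Becker--Gottlieb-type transfer $\trf_\pi^*$ constructed in \cite{ER-W} for the block bundle $\pi$; this comparison is most cleanly carried out by tracing both constructions back to a common Thom-spectrum description built from the stable vertical tangent bundle $T_\pi^s E$, after which naturality under restriction yields the stated identity for $\pi$.
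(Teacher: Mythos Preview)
Your treatment of (i)--(iii) matches the paper's almost verbatim: flip involution for (i), normal bundle of the diagonal embedding for (ii), and the K\"unneth decomposition of $B\times M\times M$ for (iii).

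For (iv) the approaches diverge. You propose to first prove an identity $p_!(e(T_pX)\cdot x)=\trf_p^*(x)$ for some abstractly defined transfer (via Atiyah duality on the fibre, in the spirit of Dold), and then face the ``genuine obstacle'' of showing that this transfer agrees with the one actually used, namely the Casson--Gottlieb transfer of \cite{CG} as adapted in \cite{ER-W}. You do not resolve this comparison; the suggestion to trace both back to a Thom-spectrum description built from $T_\pi^s E$ is not a proof, and it is not clear that the CG construction, which passes through the Closed Fibre Smoothing Theorem and a fibrewise doubling, admits such a description in any straightforward way.

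The paper sidesteps this obstacle entirely by working \emph{inside} the CG construction rather than comparing to it. One unwinds the CG definition: $\trf_p^*=\trf_{p'',vuts}^*\,s^*t^*$ where $p'':X''\to B$ is a genuine smooth fibre bundle obtained by smoothing and doubling, and for the smooth bundle one has $\trf_{p'',vuts}^*(-)=p''_!(\delta^*d_!(1)\cdot-)$ for the graph and diagonal maps $\delta,d:X''\to X''\times_B X''$. The remaining work is to show $(ts)_!\delta^*d_!(1)=\Delta^*\Delta_!(1)$, which follows from a base-change lemma (push-pull agrees up to sign on homotopy cartesian squares of oriented Poincar\'e spaces) applied to two explicit squares. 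This route is more concrete than yours and, crucially, never needs to identify two separately constructed transfers.
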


\begin{proof}
For (i), consider the involution $\tau$ of $X \times_B X$ which interchanges the two factors. When $d$ is odd, this has degree $-1$, and so $\tau^* \Delta_! = -\Delta_!$. On the other hand $\Delta^* \tau^* = \Delta^*$, so $e(T_\pi E)=-e(T_\pi E)$.

For (ii), note that if $(\pi : E \to \vert K \vert, \mathcal{A})$ arises from a smooth fibre bundle then in Construction \ref{const:Euler} we do not need to replace it by a fibration. The resulting $p : X \to B$ is a smooth fibre bundle with vertical tangent bundle $T_p X$, and the map $\Delta : X \to X \times_B X$ is a smooth embedding with normal bundle $T_p X$. Hence $\Delta^*\Delta_!(1)$ is the Euler class of $T_p X$, which restricts to the Euler class of $T_\pi E$.

For (iii), if such an $r$ exists then the fibration $p : X \to B$ admits a similar fibre homotopy trivialisation, $p \times \rho : X \overset{\sim}\to B \times M$. Then $X \times_B X \simeq B \times M \times M$ and the map $\Delta$ is given by the identity map on $B$ and the diagonal map on $M$. Hence $\Delta^*\Delta_!(1) = 1 \otimes e(TM)$.

For (iv), we must involve ourselves in the details of the construction of the transfer in \cite{CG}, with which we assume the reader is familiar. We begin by constructing a commutative diagram
\begin{equation*}
\xymatrix{
&F \ar[d] \ar@/_/[r] & W \times T^k \ar@/_/[l] \ar[d] \ar@/_/[r]& D(W) \times T^k \ar@/_/[l]\ar[d]\\
E \ar[d]^-\pi \ar@/_/[r]& X \ar@/_/[l]_-{r} \ar@/_/[r]_-{u} \ar[d]^-{p}& X' \ar@/_/[l]_-{t} \ar@/_/[r]_-{v} \ar[d]^-{p'}& X'' \ar@/_/[l]_-{s} \ar[d]^-{p''}& \\
\vert K \vert \ar@/_/[r]& B \ar@/_/[l]\ar@{=}[r]& B \ar@{=}[r]& B.
}
\end{equation*}
In this diagram, $B$ is a Poincar{\'e} duality space and $p$ is a Hurewicz fibration with fibre $F \simeq M^{d}$ (obtained as in Construction \ref{const:Euler}). $W$ is a smooth oriented manifold of dimension $(d+\ell)$ with boundary, which is homotopy equivalent to $M$, and $p'$ is a smooth fibre bundle (obtained from the Closed Fibre Smoothing Theorem of \cite{CG}). The map $p''$ is obtained as the fibrewise double of $p'$, and is a smooth oriented fibre bundle with closed fibres. Finally, the horizontal arrows express each left-hand space as a (fibrewise) retract of the the right-hand space.

For a fibration $p : S \to T$ with fibre homotopy equivalent to a finite CW complex, and a fibrewise map $f : S \to S$, let us write $\trf_{p, f}^* : H^*(S) \to H^*(T)$ for the associated transfer map. This is the map denoted $\tau^f$ in \cite{CG}. When $f = \mathrm{Id}_S$, we shorten this to $\trf_p^*$. 

By the definition of the transfer in \cite[\S 6]{CG}, we have $\trf_{p}^* = \trf_{p'', vuts}^*  s^*  t^*$. By the construction of the transfer for smooth fibre bundles in \cite[\S 5]{CG}, if we write
\begin{align*}
\delta = (\mathrm{Id}_{X''}, vuts) : X'' \lra X'' \times_B X''\\
d = (\mathrm{Id}_{X''}, \mathrm{Id}_{X''}) : X'' \lra X'' \times_B X''
\end{align*}
then we have $\trf_{p'', vuts}^*(-) = p''_!(\delta^*(d_!(1)) \cdot -)$. Thus the map $\trf_{p}^*(-)$ is $p''_!(\delta^*(d_!(1))\cdot s^*  t^*(-))=(p t s)_!(\delta^*(d_!(1))\cdot s^*  t^*(-))$, which we may write as $p_!((t  s)_!(\delta^*(d_!(1)) \cdot -)$, so we will be done if $(t  s)_!(\delta^*(d_!(1)))$ is equal to the class $e(T_{p}X)$ defined by Construction \ref{const:Euler}. Consider the homotopy pullback squares
\begin{equation*}
\xymatrix{
X'' \ar[r]^-{ts} \ar[d]^-\delta & X \ar[d]^{(\mathrm{Id} \times vu) \circ \Delta}& & X'' \ar[d]^-{ts} \ar[rr]^-{(ts \times \mathrm{Id}) \circ d}  && X \times_B X'' \ar[d]^-{\mathrm{Id} \times ts}\\
X'' \times_B X'' \ar[r]^-{ts \times \mathrm{Id}} & X \times_B X'' & & X \ar[rr]^-{\Delta} && X \times_B X
}
\end{equation*}
of Poincar{\'e} duality spaces, to which Lemma \ref{lem:Pushforward} below applies and shows that
$$(ts)_! \delta^* = \Delta^* (\mathrm{Id} \times vu)^* (ts \times \mathrm{Id})_! \quad\quad (ts \times \mathrm{Id})_! d_!(ts)^* = (\mathrm{Id} \times ts)^* \Delta_!.$$
(The signs can be determined by restricting each square to a single fibre over $B$.) Thus, writing $1=(ts)^*(1)$, we have
\begin{align*}
(ts)_!\delta^*d_!(ts)^*(1) &= \Delta^* (\mathrm{Id} \times vu)^* (ts \times \mathrm{Id})_!d_!(ts)^*(1)\\
&= \Delta^* (\mathrm{Id} \times vu)^* (\mathrm{Id} \times ts)^* \Delta_!(1) = \Delta^*\Delta_!(1)
\end{align*}
which is $e(T_{p}X)$, as required.
\end{proof}

\begin{lem}\label{lem:Pushforward}
Consider a homotopy cartesian square
\begin{equation*}
\xymatrix{
A \ar[r]^-g \ar[d]^-u& C \ar[d]^-v\\
B \ar[r]^-f& D
}
\end{equation*}
of oriented Poincar{\'e} duality spaces. Then $g_! u^* = \pm v^* f_!$.
\end{lem}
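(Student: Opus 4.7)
The plan is to use Poincar{\'e} duality to rewrite the claim as a base change formula for pushforwards and ``Gysin pullbacks'' in homology, and then verify the latter using the homotopy-cartesian hypothesis.

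For a map $h : X \to Y$ of oriented Poincar{\'e} duality spaces of dimensions $m$ and $n$, recall that the umkehr $h_!$ is defined so that $h_!(\alpha) \cap [Y] = h_*(\alpha \cap [X])$, and dually define the Gysin pullback $h^! := \mathrm{PD}_X \circ h^* \circ \mathrm{PD}_Y^{-1} : H_*(Y) \to H_{*+m-n}(X)$, where $\mathrm{PD}_Y : H^*(Y) \to H_{n-*}(Y)$ is cap with $[Y]$. First I would cap both sides of the asserted equation with $[C]$. Writing $\gamma := \beta \cap [B]$ for $\beta \in H^*(B)$, and using the identities $u^*\beta \cap [A] = u^!\gamma$ and $v^*(f_!\beta) \cap [C] = v^!(f_!\beta \cap [D]) = v^!(f_*\gamma)$, the lemma becomes the base change identity
$$g_* \circ u^! \;=\; \pm\, v^! \circ f_* : H_*(B) \lra H_*(C).$$

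To prove this identity I would exploit the homotopy-cartesian hypothesis: replace the square by a strict pullback in which $v$ is a Hurewicz fibration, so that $u$ is also a fibration with the same homotopy fibre $F$. By Gottlieb's theorem \cite{GottliebPoincare}, $F$ is an oriented Poincar{\'e} duality space of dimension $\dim C - \dim D$, compatibly oriented with the four corners. In this fibration setting, $v^!$ admits a fibrewise description as ``capping with the fundamental class $[F]$'' (up to the twist supplied by the fibration); this is the classical Gysin construction for sphere fibrations, extended to general Poincar{\'e} fibrations via the Leray--Serre spectral sequence (where the top row in the fibre direction contains a canonical copy of $H_*(D)$, no differentials entering or leaving it having any effect in the relevant range) or via the fibrewise Spivak normal fibration. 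The same description for $u^!$ then gives the base change by naturality, since $g$ restricts to the identity on each fibre over $B$.

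The main obstacle is making the fibrewise description precise and checking that the orientation of $F$ used on the two sides of the square is genuinely the same; this is a relative form of Gottlieb's theorem, and carries the technical content of the proof. Once that is in place, naturality of the fibrewise cap product takes over. The sign ambiguity can then be pinned down, as in the proof of Lemma \ref{lem:Euler}(iv), by restricting the whole square to a single point $b \in B$, where it degenerates to the identity map of $F$ over a point, and the asserted formula reduces to the tautology $[F] = \pm [F]$.
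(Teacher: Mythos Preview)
Your reduction to the homological base change identity $g_* u^! = \pm v^! f_*$ is correct and clean. The gap is in the next step: you assert that $v^!$ admits a ``fibrewise'' description, and then openly say that making this precise is ``the main obstacle'' and ``carries the technical content of the proof''---but you do not actually supply it. The Leray--Serre justification you sketch is not right as stated: in the homology spectral sequence the top row $E^2_{p,f} = H_p(D;\mathcal{H}_f(F))$ receives differentials from $E^r_{p+r,f-r+1}$, so $E^\infty_{p,f}$ is only a \emph{quotient} of $H_p(D)$ and there is no edge map $H_p(D) \to H_{p+f}(C)$ in general. The fibrewise Spivak route you allude to can be made to work, but it requires establishing that the umkehr defined via the fibrewise normal data agrees with the one defined by global Poincar{\'e} duality of $C$ and $D$, and that this identification is natural in pullbacks; this is exactly the substance of the lemma, and you have not supplied it.

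The paper avoids the fibrewise viewpoint entirely. Instead it stabilises by crossing $C$ and $D$ with a large sphere so that $f$ may be assumed to admit the structure of a Poincar{\'e} embedding (in the sense of Klein) with normal spherical fibration $\xi$ and complement $K$. Pulling this decomposition back along $v$ and applying Mather's Second Cube Theorem yields a compatible Poincar{\'e} embedding of $g$ with normal fibration $u^*\xi$, and hence a commuting square of Pontrjagin--Thom collapse maps
\[
\xymatrix{
C \ar[r] \ar[d]^-v & \mathrm{Th}(u^*\xi) \ar[d]^-{\mathrm{Th}(u)}\\
D \ar[r] & \mathrm{Th}(\xi)
}
\]
whose commutativity in cohomology is precisely $g_! u^* = \pm v^* f_!$. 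This approach treats the horizontal maps rather than the vertical ones, needs no statement about the homotopy fibre $F$ being Poincar{\'e}, and reduces the compatibility question to the purely homotopy-theoretic Second Cube Theorem rather than to an orientation comparison.
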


The sign ambiguity is unavoidable under the given hypotheses: changing the orientation of $B$, say, does not change $g_! u^*$, but changes $v^* f_!$ by a sign.

\begin{proof}
Let us write $a$ for the formal dimension of $A$, and so on. We assume some familiarity with the notion of Poincar{\'e} embeddings, for which we refer to \cite{Klein} for details. It is enough to prove the identity for the larger square 
\begin{equation*}
\xymatrix{
A \ar[r]^-g \ar[d]^-u& C \ar[r]^-{\mathrm{Id}_C \times \{*\}} \ar[d]^-v& C \times S^N \ar[d]^-{v \times \mathrm{Id}_{S^N}}\\
B \ar[r]^-f& D \ar[r]^-{\mathrm{Id}_D \times \{*\}}& D \times S^N.
}
\end{equation*}
By this device, we may suppose \cite[Lemma 3.1]{Klein} that $f$ admits the structure of a Poincar{\'e} embedding, with complement $K$ and normal spherical fibration $\xi$ of dimension $(d-b-1)$. Let $u^*\xi \to A$ denote the pulled back spherical fibration, and $v^* K \to C$ denote the homotopy pullback of the map $K \to D$ along $v$. There is then a homotopy commutative cube
\begin{equation*}
\xymatrix@!0{
u^*\xi \ar[dd] \ar[rd] \ar[rr] && v^*K \ar'[d][dd] \ar[rd]\\
& A \ar[dd] \ar[rr] && C \ar[dd]\\
\xi \ar'[r][rr] \ar[rd] && K \ar[rd]\\
& B \ar[rr] && D
}
\end{equation*}
in which the bottom face is homotopy cocartesian, and the vertical faces are all homotopy cartesian. It follows by Mather's Second Cube Theorem \cite[Theorem 25]{Mather} that the top face is also homotopy cocartesian. We therefore have a map 
$$C  \simeq A \cup_{u^* \xi} v^*K \lra A/u^*\xi = \mathrm{Th}(u^*\xi)$$
by collapsing $v^*K$, and similarly for $K$. This gives a homotopy commutative diagram
\begin{equation*}
\xymatrix{
C \ar[r] \ar[d]^-v & \mathrm{Th}(u^*\xi \to A) \ar[d]^-{\mathrm{Th}(u)}\\
D \ar[r] & \mathrm{Th}(\xi \to B)
}
\end{equation*}
which in cohomology yields the required equation. From this point of view, the sign ambiguity arises from the two possible choices of Thom class for $u^*\xi$: the one compatible with $[C]$ and $[A]$, or the pullback of the one compatible with $[D]$ and $[B]$.
\end{proof}

\section{Proof of Proposition \ref{prop:Nonvanishing}}\label{sec:proof}

We can extend the definition of the classes $\tilde{\kappa}_{e^i p_I}$ to block bundles having fibres $W_{g,1}$ by filling in a disc in each fibre, giving a new block bundle with fibre $W_g := W_{g,1} \cup_\partial D^{2n} = \#^g S^n \times S^n$. There are therefore defined universal characteristic classes $\tilde{\kappa}_{e^i p_I} \in H^*(B\widetilde{\Diff}_\partial(W_{g,1});\bQ)$, by the proof of \cite[Theorem 3.4]{ER-W}.

In particular, we have a class $\tilde{\kappa}_{e^2} - \tilde{\kappa}_{p_n} \in H^{2n}(B\widetilde{\Diff}_\partial(W_{g,1});\bQ)$ which vanishes in $H^{2n}(B\Diff_\partial(W_{g,1});\bQ)$, because $e^2 = p_n$ on the total space of a smooth fibre bundle. Proposition \ref{prop:Nonvanishing} is an immediate consequence of the following.

\begin{prop}\label{prop:Nonvanishing2}
For each $g \geq 1$ and each $n \geq 3$ there is a block bundle $(\pi : E \to \vert K \vert, \mathcal{A})$ with fibre $W_{g,1}$, such that
\begin{enumerate}[(i)]
\item $\tilde{\kappa}_{e^2}(\pi)=0 \in H^{2n}(\vert K \vert;\bQ)$,

\item $\tilde{\kappa}_{p_n}(\pi) \neq 0 \in H^{2n}(\vert K \vert;\bQ)$.
\end{enumerate}
Therefore $\tilde{\kappa}_{e^2} - \tilde{\kappa}_{p_n} \neq 0 \in H^{2n}(B\widetilde{\Diff}_\partial(W_{g,1});\bQ)$.
\end{prop}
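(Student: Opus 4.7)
The plan is to build a fibrewise-homotopy-trivial block bundle $\pi : E \to \vert K \vert$ with fibre $W_{g,1}$ for which (i) is automatic from Lemma \ref{lem:Euler}(iii), and whose stable vertical tangent bundle carries a nonzero rational $p_n$ in the ``mixed'' K\"unneth bidegree so that (ii) follows by a short fibre-integration calculation. Take $\vert K \vert$ to be a closed oriented $2n$-manifold with enough rational cohomology in degrees divisible by $4$, for example $\vert K \vert = T^{2n}$ or $\bC\bP^n$; we work rationally throughout.

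To produce $\pi$, the plan is to invoke parametrised block surgery. Fibrewise-homotopy-trivial block bundles over $\vert K \vert$ with fibre $W_{g,1}$ are classified rationally, up to an $L$-theory obstruction, by normal invariants in $[\vert K \vert \times W_g, G/O]$ (after filling in the boundary disc so the fibre becomes the closed $W_g$). Since $G \otimes \bQ \simeq \ast$, the target rationally is $\prod_{i \geq 1} K(\bQ, 4i)$, assembled from the universal Pontryagin classes, so
$$[\vert K \vert \times W_g, G/O] \otimes \bQ \cong \bigoplus_{i \geq 1} H^{4i}(\vert K \vert \times W_g; \bQ),$$
and the summand $i = n$ contains $H^{2n}(\vert K \vert;\bQ) \otimes H^{2n}(W_g;\bQ) = \bQ$. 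Pick a normal invariant $\eta$ projecting non-trivially to this summand, and adjust by lower-degree Pontryagin contributions to annihilate the rational surgery obstruction in $L_{4n}(\bZ) \otimes \bQ = \bQ$; the richness of $H^{4i}(\vert K \vert;\bQ)$ for $i < n$ gives the freedom to perform this adjustment without disturbing the mixed-bidegree component of $p_n(\eta)$.

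For (i): by construction $E$ (with disc filled in) is fibrewise homotopy equivalent to $\vert K \vert \times W_g$ over $\vert K \vert$, so Lemma \ref{lem:Euler}(iii) gives $e(T_\pi E) = r^*(e(TW_g))$, and hence $e(T_\pi E)^2 = r^*(e(TW_g)^2) = 0$ because $e(TW_g)^2 \in H^{4n}(W_g;\bQ) = 0$ ($\dim W_g = 2n$); thus $\tilde{\kappa}_{e^2}(\pi) = 0$. For (ii): $W_g = \#^g S^n \times S^n$ is stably parallelisable, so $p_n(TW_g) = 0$, and $p_n(T_\pi^s E)$ is exactly the contribution of the normal invariant $\eta$. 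By construction this has a non-zero component in $H^{2n}(\vert K \vert) \otimes H^{2n}(W_g) \subset H^{4n}(E;\bQ)$, and $\pi_!$ (which pairs the $W_g$-factor against its fundamental class) extracts this to a non-zero class in $H^{2n}(\vert K \vert;\bQ)$, giving $\tilde{\kappa}_{p_n}(\pi) \neq 0$.

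The hardest step is the surgery construction: one must invoke a parametrised block surgery theorem in the correct form so that the output is genuinely a block bundle over $\vert K \vert$ (and not merely a manifold homotopy equivalent to $\vert K \vert \times W_g$), identify its stable vertical tangent bundle with the class prescribed by the normal invariant, and arrange for the $L$-theory obstruction to vanish while preserving the crucial mixed-bidegree $p_n$ component. The last point is the reason for choosing $\vert K \vert$ with enough rational cohomology in Pontryagin degrees below $n$, rather than the naive choice $\vert K \vert = S^{2n}$, which by itself leaves too little room to kill the rational signature obstruction.
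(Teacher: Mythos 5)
Your outline matches the paper's strategy (a fibre--homotopy--trivially structured block bundle produced from Quinn's surgery fibration, with (i) coming from Lemma \ref{lem:Euler}(iii) and (ii) from a mixed K\"unneth component of $p_n$ of the normal invariant), but the step you label ``the hardest'' is where the real content lies, and as written it contains a genuine gap. In the parametrised setting the surgery obstruction of your family is not a single number in $L_{4n}(\bZ)\otimes\bQ$: it is the homotopy class of a map $\vert K\vert \to \bL_{2n}(\bZ)_{(1)}$, so rationally it has a component in \emph{every} degree of $H^{*}(\vert K\vert;\bQ)$ congruent to $2n$ mod $4$, each identified with a K\"unneth factor of $\tfrac18\mathcal{L}(\xi)$ for the bundle $\xi$ classified by the normal invariant. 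In particular the top component is the $H^{2n}(\vert K\vert)\otimes H^{2n}(W_{g,1},\partial W_{g,1})$ part of $\tfrac18\mathcal{L}_{2k}(\xi)$, and since $\mathcal{L}_{2k} = A\,p_n + (\text{decomposables})$ with $A\neq 0$, your requirement that $p_n(\xi)$ have a nonzero mixed component forces you to cancel $A\cdot p_n^{\mathrm{mixed}}$ against a decomposable monomial $p_{j_1}p_{j_2}$, $j_1+j_2=n$, of $\mathcal{L}_{2k}$. That is only possible if some such monomial has nonzero coefficient, which is exactly the nontrivial input the paper takes from Weiss (Lemma A.1: the coefficient of $p_{n/2}^2$, resp.\ $p_{(n-1)/2}p_{(n+1)/2}$, is nonzero). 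Your phrase ``the richness of $H^{4i}(\vert K\vert;\bQ)$ for $i<n$ gives the freedom to perform this adjustment'' silently assumes this coefficient statement; without it the adjustment can fail outright, and no amount of cohomology in the base helps. Relatedly, a larger base such as $T^{2n}$ or $\bC\bP^n$ makes matters worse, not better: it creates additional lower-degree obstruction components (K\"unneth factors of the lower $\mathcal{L}_j(\xi)$) all of which must also be killed, whereas the paper's choice $B=S^n\times S^n$ (resp.\ $S^{2k-1}\times S^{2k+3}$) leaves only one extra component, which vanishes by inspection.

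Two smaller points. First, you need the obstruction map to be actually nullhomotopic, not just rationally trivial, to land in the block structure space; the paper arranges this by composing with the maps $\phi_N$ multiplying by $N^i$ on $K(\bZ,4i)$ and precomposing with degree-$N$ self-maps of the base, and some such device is needed in your argument as well since you ``work rationally throughout'' but the structure set is not a rational object. Second, the identification of $p_n(T^s_\pi E)$ with the Pontrjagin class of the bundle classified by the normal invariant is not automatic; it is \cite[Lemma 3.3]{ER-W} ($T^s_v E \simeq_s TE - \pi^* T B$), which should be cited rather than asserted. Your treatment of (i) is fine and agrees with the paper.
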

\begin{proof}
From Lemma \ref{lem:Euler} (iii) it follows that the $\tilde{\kappa}_{e^i}$ vanish for all $i>0$ on all fibre homotopically trivial block bundles. We will therefore construct $\pi$ to be fibre homotopically trivial, guaranteeing that $\tilde{\kappa}_{e^2}(\pi)=0$. 

We will use the (space-level) surgery fibration of Quinn \cite{Quinn}, which following the discussion in \cite[Section 3.2]{BM1}, in particular equation (43), may be put in the form
$$\left(\frac{\mathrm{hAut}_\partial(W_{g,1})}{\widetilde{\Diff}_\partial(W_{g,1})}\right)_{(1)} \lra \mathrm{map}_*(W_{g,1}/\partial W_{g,1}, G/O)_{(1)} \overset{\sigma}\lra \bL_{2n}(\bZ)_{(1)}.$$
Thus to construct a fibre homotopically trivial block bundle over $B$ (with some triangulation) it is enough to give a map $f : B \to \mathrm{map}_*(W_{g,1}/\partial W_{g,1}, G/O)_{(1)}$ and a nullhomotopy of $\sigma \circ f$.

For simplicity of exposition we restrict to the case $n=2k$. We let $B=S^n \times S^n$, write $a, b \in H^n(B;\bQ)$ for a hyperbolic basis, and write $e_1, f_1, \ldots, e_g, f_g \in H^n(W_{g,1}, \partial W_{g,1};\bQ)$ for a hyperbolic basis. Write the $n$th Hirzebruch $L$-polynomial as $\mathcal{L}_n = A p_n + B p_{n/2}^2$ modulo other Pontrjagin classes, for some constants $A$ and $B$. It is well-known that $A \neq 0$, and less well-known but true \cite[Lemma A.1]{Weiss} that $B \neq 0$. 

As the composition
$$p: G/O \overset{i}\lra BO \overset{\prod p_i}\lra \prod_{i=1}^\infty K(\bZ, 4i)$$
has homotopy fibre with finite homotopy groups, we claim that may find a map $f$ whose adjoint $\hat{f} : (B \times W_{g,1}, B \times \partial W_{g,1}) \to (G/O, *)$ composed with $i$ gives a class
$$\xi \in KO^0(B \times W_{g,1}, B \times \partial W_{g,1})$$
which has $p_{n/2}(\xi) = C \cdot (a \otimes e_1 + b \otimes f_1)$, $p_n(\xi) = -\tfrac{2B C^2}{A}\cdot a\cdot b \otimes e_1 \cdot f_1$, and all other rational Pontrjagin classes zero, for some constant $C \neq 0$. To establish this claim, let the map
$$\varphi: (B \times W_{g,1}, B \times \partial W_{g,1}) \lra \left(\prod_{i=1}^\infty K(\bZ, 4i), *\right)$$
classify the pair of relative cohomology classes $L \cdot (a \otimes e_1 + b \otimes f_1)$ and $-\tfrac{2B L^2}{A}\cdot a\cdot b \otimes e_1 \cdot f_1$, for some integer $L \neq 0$ large enough that these classes are integral. For each $N > 0$ consider the map $\phi_N : \prod_{i} K(\bZ, 4i) \to \prod_{i} K(\bZ, 4i)$ which multiplies by $N^i$ on $K(\bZ, 4i)$. The diagram
\begin{equation*}
\xymatrix{
B \times \partial W_{g,1} \ar@{^(->}[d] \ar[r]^-{\varphi\vert_{B \times \partial W_{g,1}}}& {*} \ar[d] \ar[r] & G/O \ar[d]^-p \\
B \times W_{g,1} \ar@{-->}[rru]_-{\hat{f}} \ar[r]^-\varphi& {\prod\limits_{i=1}^\infty K(\bZ, 4i)} \ar[r]^-{\phi_N}& {\prod\limits_{i=1}^\infty K(\bZ, 4i)}
}
\end{equation*}
then admits a dotted lift $\hat{f}$ for $N$ large enough, as the universal obstructions to finding such a lift lie in the cohomology of $\prod_{i} K(\bZ, 4i)$ with finite coefficients, and are therefore annihilated (on each skeleton) by some $\phi_N$. The resulting map $\hat{f}$ gives $p_{n/2}(\xi) = L \cdot N^{n/2} \cdot (a \otimes e_1 + b \otimes f_1)$, $p_n(\xi) = -\tfrac{2B L^2}{A} \cdot N^n \cdot a\cdot b \otimes e_1 \cdot f_1$, and all other Pontrjagin classes zero, as required (with $C=L \cdot N^{n/2}$).

We must show that the composition
$$B=S^n \times S^n \overset{f}\lra \mathrm{map}_*(W_{g,1}/\partial W_{g,1}, G/O)_{(1)} \overset{\sigma}\lra \bL_{2n}(\bZ)_{(1)}$$
is nullhomotopic, but we shall allow ourselves to precompose $f$ with self-maps $k_N : S^n \times S^n \to S^n \times S^n$ having degree $N \neq 0$ on both factors (such a precomposition preserves the form of Pontrjagin classes which has been arranged above). With this in mind, it is enough to show that
$$\sigma \circ f = 0 \in [B, \bL_{2n}(\bZ)] \otimes \bQ.$$
This group may be identified with $H^{4*}(B;\bQ)$. If $n \equiv 0 \mod 4$ then the component of degree $n=2k=4\ell$ is identified with the K{\"u}nneth factor of
$$\tfrac{1}{8}\mathcal{L}_{3\ell}(\xi) \in H^{12\ell}(B \times W_{g,1}, B \times \partial W_{g,1};\bQ)$$
in $H^{4\ell}(B;\bQ) \cong H^{4\ell}(B;\bQ) \otimes H^{8\ell}(W_{g,1}, \partial W_{g,1};\bQ)$. But $\mathcal{L}_{3\ell}(\xi)=0$ by observation, as only $p_{4\ell}(\xi)$ and $p_{2\ell}(\xi)$ are non-zero. Whatever the class of $n$ modulo 4, the component of degree $2n=4k$ is identified with the K{\"u}nneth factor of
$$\tfrac{1}{8}\mathcal{L}_{2k}(\xi) \in H^{8k}(B \times W_{g,1}, B \times \partial W_{g,1};\bQ)$$
in $H^{4k}(B;\bQ)$. But by construction
$$\mathcal{L}_{2k}(\xi)= A \cdot (-\tfrac{2B C^2}{A}\cdot a\cdot b \otimes e_1 \cdot f_1) + B \cdot (C \cdot (a \otimes e_1 + b \otimes f_1))^2=0.$$

We therefore obtain a map $f$, with $\sigma \circ f$ nullhomotopic and $i \circ \hat{f}$ classifying a vector bundle $\xi'$ having $p_{n/2}(\xi') = D \cdot (a \otimes e_1 + b \otimes f_1)$, $p_n(\xi') = -\tfrac{2B D^2}{A}\cdot a\cdot b \otimes e_1 \cdot f_1$, and all other Pontrjagin classes zero, for some constant $D \neq 0$. (The constant will have changed when we precomposed the original choice of $f$ with the maps $k_N$.) The associated block bundle $\pi : E \to \vert K \vert \approx B$ has $T_v^s E \simeq_s TE - \pi^*TB = \epsilon^{2n}+\xi'$ (see \cite[Lemma 3.3]{ER-W}) and so
$$\tilde{\kappa}_{p_n}(\pi) = \pi_!(p_n(T_v^s E)) = \pi_!(p_n(\xi')) = -\tfrac{2B D^2}{A}\cdot a\cdot b \neq 0$$
as required.

It is not difficult to adapt the above argument to work for $n = 2k+1$. The essential point is that if we write $\mathcal{L}_n = A p_n + B p_{\tfrac{n-1}{2}}p_{\tfrac{n+1}{2}}$ modulo all other Pontrjagin classes, then $A \neq 0$ and again by \cite[Lemma A.1]{Weiss} $B \neq 0$. We then take $B=S^{2k-1} \times S^{2k+3}$ and proceed as above.
\end{proof}

\section{Rational connectivity of $\mathrm{Top}(2n)/\mathrm{O}(2n)$}\label{sec:Morlet}

Our goal in this section is to show how similar techniques to those we have been using imply the following. 

\begin{thm}\label{thm:Morlet}
$\pi_*(B\Diff_\partial(D^{2n})) \otimes \bQ=0$ for $1 \leq * \leq 2n-5$.
\end{thm}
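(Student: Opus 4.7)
The plan is to mimic the strategy of Section 2, now with $D^{2n}$ in place of $W_{g,1}$. First, by the Alexander trick (applied block-wise) $\widetilde{\Diff}_\partial(D^{2n})$ is contractible, so
$$B\Diff_\partial(D^{2n}) \simeq \widetilde{\Diff}_\partial(D^{2n})/\Diff_\partial(D^{2n}),$$
and the Weiss--Williams theorem furnishes a $(\phi(2n)+1)$-connected map
$$B\Diff_\partial(D^{2n}) \lra \Omega^\infty(S^\infty_+ \wedge_{\bZ/2} \Omega \mathbf{Wh}^{\Diff}_s(\ast)),$$
the Whitehead spectrum being that of a point because $D^{2n}$ is contractible.

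The target is then computed rationally along the lines of Section 2.1. Applied to the point, the Hsiang--Staffeldt minimal model has $K = \bQ$ and $\bar{K} = 0$, so $H_*(\mathbf{A}(\ast);\bQ) \cong K_*(\bZ) \otimes \bQ$ in positive degrees. The cofibre sequence \eqref{eq:CofibSeq} then gives $H_*(\mathbf{Wh}^{\Diff}(\ast);\bQ) \cong \tilde{K}_*(\bZ) \otimes \bQ$, and Farrell--Hsiang's result that the $\bZ/2$-involution acts by $-1$ kills the invariants. Hence the target is rationally contractible in positive degrees, and I conclude
$$\pi_*(B\Diff_\partial(D^{2n})) \otimes \bQ = 0 \quad \text{for } 1 \leq * \leq \phi(2n)+1.$$

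The main obstacle is that Igusa's lower bound only gives $\phi(2n)+1 \ge \min\{(2n-5)/2, (2n-1)/3\}$, which falls short of the claimed $2n-5$. To bridge the gap I would appeal to Morlet's identification $B\Diff_\partial(D^{2n}) \simeq \Omega^{2n}(\mathrm{Top}(2n)/\mathrm{O}(2n))_0$ together with the classical (Kirby--Siebenmann/Sullivan) rational contractibility of $\mathrm{Top}/\mathrm{O}$, which reduces matters to controlling the comparison $\mathrm{Top}(2n)/\mathrm{O}(2n) \to \mathrm{Top}/\mathrm{O}$ rationally. The block-bundle Euler class and the relation $\tilde{\kappa}_{e^2} = \tilde{\kappa}_{p_n}$ from Section 3 should then provide, via a Serre spectral sequence analysis mirroring that of Section 2, the further rational triviality needed to push the vanishing up through the whole range $* \leq 2n-5$. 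The hard part is making this last step sharp: one must verify that every potential obstruction class is hit by, or eliminated via, a differential coming from a block bundle constructed by the methods of Section 3.
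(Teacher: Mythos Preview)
Your argument only reproves Farrell--Hsiang's result, and you acknowledge this: everything up through ``$\pi_*(B\Diff_\partial(D^{2n}))\otimes\bQ=0$ for $1\le *\le \phi(2n)+1$'' is correct (modulo the minor point below), but that is exactly the statement the paper says it is \emph{extending}. The final paragraph, where the actual content of the theorem should be, is not a proof: the appeal to $\mathrm{Top}/\mathrm{O}$ and to $\tilde\kappa_{e^2}=\tilde\kappa_{p_n}$ is vague, and in fact the paper shows that $\tilde\kappa_{e^2}-\tilde\kappa_{p_n}$ is \emph{nonzero} on block bundles, so it cannot serve as the vanishing mechanism you suggest.

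What the paper actually does is bypass the pseudoisotopy stable range entirely. The two ingredients you are missing are: (i) Morlet's \emph{lemma of disjunction}, which gives that the inclusion-induced map
\[
\frac{\widetilde{\Diff}_\partial(D^{2n})}{\Diff_\partial(D^{2n})} \lra \frac{\widetilde{\Diff}_\partial(W_{g,1})}{\Diff_\partial(W_{g,1})}
\]
is $(2n-4)$-connected; and (ii) the Berglund--Madsen theorem that $i^*:H^*(B\widetilde{\Diff}_\partial(W_{g,1});\bQ)\to H^*(B\Diff_\partial(W_{g,1});\bQ)$ is an \emph{isomorphism} in degrees $*\le 2n-1$ for $g\gg0$. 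From (ii) and the Serre spectral sequence of \eqref{eq:Fibration} one reads off $\tilde H^q\bigl(\tfrac{\widetilde{\Diff}_\partial(W_{g,1})}{\Diff_\partial(W_{g,1})};\bQ\bigr)=0$ for $1\le q\le 2n-4$, and then (i) transfers this to the disc in degrees $\le 2n-5$. The Weiss--Williams map and the Whitehead spectrum never enter.

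A small correctness point: the Alexander trick does not make $\widetilde{\Diff}_\partial(D^{2n})$ contractible in the smooth category (its $\pi_0$ is the group of exotic $(2n{+}1)$-spheres). The paper uses the Alexander trick on $\mathrm{hAut}_\partial(D^{2n})$ and then the surgery fibration to conclude that $B\widetilde{\Diff}_\partial(D^{2n})$ is only \emph{rationally} acyclic, which suffices.
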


This extends the analogous calculation of Farrell--Hsiang \cite{FH}, which established the same result in degrees $1 \leq * \leq \phi(2n)$. By smoothing theory we have a homotopy equivalence
$B\Diff_\partial(D^{2n}) \simeq \Omega^{2n}_0 (\mathrm{Top}(2n)/\mathrm{O}(2n))$ as long as $2n > 4$, from which we deduce that

\begin{cor}
$\mathrm{Top}(2n)/\mathrm{O}(2n)$ is rationally $(4n-5)$-connected as long as $n>2$.
\end{cor}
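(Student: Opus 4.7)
The plan is to transport Theorem~\ref{thm:Morlet} across the Morlet smoothing-theory equivalence
\[
B\Diff_\partial(D^{2n}) \simeq \Omega^{2n}_0(\mathrm{Top}(2n)/\mathrm{O}(2n)),
\]
which holds whenever $2n > 4$. Taking rational homotopy groups yields, for each $i \geq 1$, a natural identification $\pi_i(B\Diff_\partial(D^{2n})) \otimes \bQ \cong \pi_{i+2n}(\mathrm{Top}(2n)/\mathrm{O}(2n)) \otimes \bQ$. Applying Theorem~\ref{thm:Morlet} with $1 \leq i \leq 2n-5$ then immediately gives rational vanishing of $\pi_j(\mathrm{Top}(2n)/\mathrm{O}(2n))$ in the range $2n+1 \leq j \leq 4n-5$, which is the ``upper half'' of the required connectivity window.

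It remains to cover the low-degree range $1 \leq j \leq 2n$, which the Morlet equivalence does not see (it only probes the basepoint component of the $2n$-fold loop space). For this I would invoke two classical inputs: first, $\pi_*(\mathrm{Top}/\mathrm{O})$ is entirely torsion (by Kirby--Siebenmann theory), so $\mathrm{Top}/\mathrm{O}$ is rationally contractible; second, the stabilization map $\mathrm{Top}(2n)/\mathrm{O}(2n) \to \mathrm{Top}/\mathrm{O}$ is a rational isomorphism on $\pi_j$ throughout the range $j \leq 2n$ by the standard stability for the orthogonal and topological groups. Together these give $\pi_j(\mathrm{Top}(2n)/\mathrm{O}(2n)) \otimes \bQ = 0$ for every $1 \leq j \leq 2n$, and combining with the previous paragraph yields the assertion in all of $1 \leq j \leq 4n-5$.

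The substantive content is entirely upstream, in Theorem~\ref{thm:Morlet}; the corollary itself is a bookkeeping argument at the level of rational homotopy groups once the smoothing-theory equivalence is in hand. The only genuine obstacle is ensuring that the stability range for $\mathrm{Top}(n) \to \mathrm{Top}$ used in the low-degree step is rationally large enough to cover degrees up to $2n$. Though this stability is more delicate than its smooth or orthogonal counterpart, the classical range is comfortably sufficient, so invoking it is a matter of quoting the correct reference rather than proving anything new.
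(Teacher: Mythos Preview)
Your proposal is correct and matches the paper's approach: the paper deduces the corollary from Theorem~\ref{thm:Morlet} via the Morlet equivalence in a single sentence, without further elaboration. You have rightly made explicit the low-degree step (degrees $j \leq 2n$) that the paper leaves implicit, and your justification via the finiteness of $\pi_*(\mathrm{Top}/\mathrm{O})$ together with the Kirby--Siebenmann stability of $\mathrm{Top}(2n)/\mathrm{O}(2n) \to \mathrm{Top}/\mathrm{O}$ is the standard one.
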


On the other hand, it has been shown by Weiss \cite{Weiss} that
$$H^{4n}(B\mathrm{Top}(2n);\bQ) \lra H^{4n}(B\mathrm{O}(2n);\bQ)$$
has nontrivial kernel for $n \gg0$ (namely, the class $e^2 - p_n$), so $\mathrm{Top}(2n)/\mathrm{O}(2n)$ is \emph{not} rationally $(4n-1)$-connected.

\begin{proof}[Proof of Theorem \ref{thm:Morlet}]
Let $W_g = \#^g S^n \times S^n \setminus \mathrm{int}(D^{2n})$, with $2n \geq 6$, and choose a collar $[0,1) \times \partial W_{g,1} \subset W_{g,1}$ and a disc $D^{2n} \subset (0,1) \times \partial W_{g,1}$. The map 
\begin{equation}\label{eq:qwerty}
\frac{\widetilde{\Diff}_\partial(D^{2n})}{{\Diff}_\partial(D^{2n})} \lra \frac{\widetilde{\Diff}_\partial(W_{g,1})}{{\Diff}_\partial(W_{g,1})}
\end{equation}
is $(2n-4)$-connected, by Morlet's lemma of disjunction \cite[Corollary 3.2]{BLR}. Consider the fibration
$$\frac{\widetilde{\Diff}_\partial(W_{g,1})}{{\Diff}_\partial(W_{g,1})} \lra B{\Diff}_\partial(W_{g,1}) \overset{i}\lra \widetilde{\Diff}_\partial(W_{g,1}).$$
Up to isotopy any diffeomorphism $\varphi$ representing an element of $\pi_1(B\widetilde{\Diff}_\partial(W_{g,1}))$ may be supposed to be equal to the identity on the collar: the map \eqref{eq:qwerty} is then preserved by that induced by $\varphi$, and it then follows that $\varphi$ acts trivially on $H^*\left(\tfrac{\widetilde{\Diff}_\partial(W_{g,1})}{{\Diff}_\partial(W_{g,1})};\bQ\right)$ in the range of degrees $* \leq 2n-4$ where the map \eqref{eq:qwerty} is a cohomology injection. Hence the Serre spectral sequence
$$H^p(B\widetilde{\Diff}_\partial(W_{g,1}); H^q\left(\tfrac{\widetilde{\Diff}_\partial(W_{g,1})}{{\Diff}_\partial(W_{g,1})};\bQ\right)) \Longrightarrow H^{p+q}(B\Diff_\partial(W_{g,1});\bQ)$$
associated to this fibration has a product structure in this range of degrees. But Berglund--Madsen have shown that the map 
$$i^* : H^*(B\widetilde{\Diff}_\partial(W_{g,1});\bQ) \lra H^*(B{\Diff}_\partial(W_{g,1});\bQ)$$
is an isomorphism in degrees $* \leq 2n-1$ as long as $g \gg 0$. This result will appear soon in a revision of \cite{BM2}. It follows that $H^q\left(\tfrac{\widetilde{\Diff}_\partial(W_{g,1})}{{\Diff}_\partial(W_{g,1})};\bQ\right)=0$ for $1 \leq q \leq 2n-4$, and hence by the map \eqref{eq:qwerty} that $H^q\left(\tfrac{\widetilde{\Diff}_\partial(D^{2n})}{{\Diff}_\partial(D^{2n})};\bQ\right)=0$ for $1 \leq q \leq 2n-5$.

On the other hand, the surgery fibration sequence shows that $\frac{\mathrm{hAut}_\partial(D^{2n})}{\widetilde{\Diff}_\partial(D^{2n})}$ is rationally acyclic, and $\mathrm{hAut}_\partial(D^{2n}) \simeq *$ by the Alexander trick, so $B\widetilde{\Diff}_\partial(D^{2n})$ is rationally acyclic. Boundary connect-sum makes this into an $H$-space, so it has trivial rational homotopy groups. Thus $\widetilde{\Diff}_\partial(D^{2n})$ has finitely-many components, and each one is rationally acyclic. The group ${\Diff}_\partial(D^{2n})$ has the same components, and so the quotient $\tfrac{\widetilde{\Diff}_\partial(D^{2n})}{{\Diff}_\partial(D^{2n})}$ is rationally homotopy equivalent to $B{\Diff}_\partial(D^{2n})_0$, the classifying space of the component of the identity in $B{\Diff}_\partial(D^{2n})$. It follows from the above that its rational cohomology, and hence rational homotopy, vanishes in degrees $1 \leq * \leq 2n-5$.
\end{proof}

\bibliographystyle{plain}
\bibliography{MainBib}  

\end{document}